\DeclareSymbolFont{bbold}{U}{bbold}{m}{n}
\DeclareSymbolFontAlphabet{\mathbbold}{bbold}
\DeclareMathOperator{\aut}{Aut}
\DeclareMathOperator{\GL}{GL}
\DeclareMathOperator{\SL}{SL}
\chardef\bslash=`\\ % p. 424, TeXbook
\begin{document}

%\numberwithin{equation}{section}

\newtheorem{Theorem}{Theorem}[section]

\newtheorem{Definition}[Theorem]{Definition}
\newtheorem{cor}[Theorem]{Corollary}
\newtheorem{goal}[Theorem]{Goal}

\newtheorem{Conjecture}[Theorem]{Conjecture}
\newtheorem{guess}[Theorem]{Guess}

\newtheorem{Example}[Theorem]{Example}
\newtheorem{exercise}[Theorem]{Exercise}
\newtheorem{Question}[Theorem]{Question}
\newtheorem{lemma}[Theorem]{Lemma}
\newtheorem{property}[Theorem]{Property}
\newtheorem{proposition}[Theorem]{Proposition}
\newtheorem{ax}[Theorem]{Axiom}
\newtheorem{claim}[Theorem]{Claim}

\newtheorem{nTheorem}{Surjectivity Theorem}

\theoremstyle{definition}
\newtheorem{problem}[Theorem]{Problem}
\newtheorem{question}[Theorem]{Question}

\newtheorem{remark}[Theorem]{Remark}
\newtheorem{diagram}{Diagram}
\newtheorem{Remark}[Theorem]{Remark}
\newcommand{\diagref}[1]{diagram~\ref{#1}}
\newcommand{\thmref}[1]{Theorem~\ref{#1}}
\newcommand{\secref}[1]{Section~\ref{#1}}
\newcommand{\subsecref}[1]{Subsection~\ref{#1}}
\newcommand{\lemref}[1]{Lemma~\ref{#1}}
\newcommand{\corref}[1]{Corollary~\ref{#1}}
\newcommand{\exampref}[1]{Example~\ref{#1}}
\newcommand{\remarkref}[1]{Remark~\ref{#1}}
\newcommand{\corlref}[1]{Corollary~\ref{#1}}
\newcommand{\claimref}[1]{Claim~\ref{#1}}
\newcommand{\defnref}[1]{Definition~\ref{#1}}
\newcommand{\propref}[1]{Proposition~\ref{#1}}
\newcommand{\prref}[1]{Property~\ref{#1}}
\newcommand{\itemref}[1]{(\ref{#1})}
\newcommand{\ul}[1]{\underline{#1}}

%\newcommand{\bysame}{\mbox{\rule{3em}{.4pt}}\,}

%       Math definitions
\newcommand{\CE}{\mathcal{E}}
\newcommand{\CG}{\mathcal{G}}\newcommand{\CV}{\mathcal{V}}
\newcommand{\CL}{\mathcal{L}}
\newcommand{\CM}{\mathcal{M}}
\newcommand{\A}{\mathcal{A}}
\newcommand{\CO}{\mathcal{O}}
\newcommand{\B}{\mathcal{B}}
\newcommand{\CS}{\mathcal{S}}
\newcommand{\CX}{\mathcal{X}}
\newcommand{\CY}{\mathcal{Y}}
\newcommand{\CT}{\mathcal{T}}
\newcommand{\CW}{\mathcal{W}}
\newcommand{\CJ}{\mathcal{J}}
\newcommand{\Q}{\mathbb{Q}}
\newcommand{\Z}{\mathbb{Z}}

\newcommand{\st}{\sigma}
\renewcommand{\k}{\varkappa}
\newcommand{\Frac}{\mbox{Frac}}
\newcommand{\XC}{\mathcal{X}}
\newcommand{\wt}{\widetilde}
\newcommand{\wh}{\widehat}
\newcommand{\mk}{\medskip}
\renewcommand{\sectionmark}[1]{}
\renewcommand{\Im}{\operatorname{Im}}
\renewcommand{\Re}{\operatorname{Re}}
\newcommand{\la}{\langle}
\newcommand{\ra}{\rangle}
\newcommand{\LND}{\mbox{LND}}
\newcommand{\Pic}{\mbox{Pic}}
\newcommand{\lnd}{\mbox{lnd}}
\newcommand{\GLND}{\mbox{GLND}}\newcommand{\glnd}{\mbox{glnd}}
\newcommand{\Der}{\mbox{DER}}\newcommand{\DER}{\mbox{DER}}
\renewcommand{\th}{\theta}
\newcommand{\ve}{\varepsilon}
\newcommand{\1}{^{-1}}
\newcommand{\iy}{\infty}
\newcommand{\iintl}{\iint\limits}
\newcommand{\capl}{\operatornamewithlimits{\bigcap}\limits}
\newcommand{\cupl}{\operatornamewithlimits{\bigcup}\limits}
\newcommand{\suml}{\sum\limits}
\newcommand{\ord}{\operatorname{ord}}
\newcommand{\gal}{\operatorname{Gal}}
\newcommand{\bk}{\bigskip}
\newcommand{\fc}{\frac}
\newcommand{\g}{\gamma}
\newcommand{\be}{\beta}
\newcommand{\dl}{\delta}
\newcommand{\Dl}{\Delta}
\newcommand{\lm}{\lambda}
\newcommand{\Lm}{\Lambda}
\newcommand{\om}{\omega}
\newcommand{\ov}{\overline}
\newcommand{\vp}{\varphi}
\newcommand{\kap}{\varkappa}

\newcommand{\Vp}{\Phi}
\newcommand{\Varphi}{\Phi}
\newcommand{\BC}{\mathbb{C}}
\newcommand{\C}{\mathbb{C}}\newcommand{\BP}{\mathbb{P}}
\newcommand{\BQ}{\mathbb {Q}}
\newcommand{\BM}{\mathbb{M}}
\newcommand{\mbh}{\mathbb{H}}
\newcommand{\BR}{\mathbb{R}}\newcommand{\BN}{\mathbb{N}}
\newcommand{\BZ}{\mathbb{Z}}\newcommand{\BF}{\mathbb{F}}
\newcommand{\BA}{\mathbb {A}}
\renewcommand{\Im}{\operatorname{Im}}
\newcommand{\idd}{\operatorname{id}}
\newcommand{\ep}{\epsilon}
\newcommand{\tp}{\tilde\partial}
\newcommand{\doe}{\overset{\text{def}}{=}}
\newcommand{\supp} {\operatorname{supp}}
\newcommand{\loc} {\operatorname{loc}}
% \define {\un}{\underline}
%\newcommand {\ov}{\overline}
\newcommand{\de}{\partial}
\newcommand{\z}{\zeta}
\renewcommand{\a}{\alpha}
\newcommand{\G}{\Gamma}
\newcommand{\der}{\mbox{DER}}

\newcommand{\Spec}{\operatorname{Spec}}
\newcommand{\Sym}{\operatorname{Sym}}
\newcommand{\Aut}{\operatorname{Aut}}

\newcommand{\Idd}{\operatorname{Id}}

\newcommand{\tG}{\widetilde G}
%%%%%%%%%%%%%%%%%%%%%%%%%%%%%%%%%
\begin{comment}
\newcommand{\FX}{\mathfrac {X}}
\newcommand{\FV}{\mathfrac {V}}
\newcommand{\SX}{\mathrsfs {X}}
\newcommand{\SV}{\mathrsfs {V}}
\newcommand{\SO}{\mathrsfs {O}}
\newcommand{\SD}{\mathrsfs {D}}
\newcommand{\Sr}{\rho}
\newcommand{\SR}{\mathrsfs {R}}

\end{comment}

\newcommand{\FX}{\mathfrac {X}}
\newcommand{\FV}{\mathfrac {V}}
\newcommand{\SX}{\mathcal {X}}
\newcommand{\SV}{\mathcal {V}}
\newcommand{\SO}{\mathcal {O}}
\newcommand{\SD}{\mathcal {D}}
\newcommand{\Sr}{\rho}
\newcommand{\SR}{\mathcal {R}}
\newcommand{\cl}{\mathcal{C}}
\newcommand{\ok}{\mathcal{O}_K}

\setcounter{equation}{0} \setcounter{section}{0}

\newcommand{\ds}{\displaystyle}
\newcommand{\gl}{\lambda}
\newcommand{\gL}{\Lambda}
\newcommand{\gge}{\epsilon}
\newcommand{\gG}{\Gamma}
\newcommand{\ga}{\alpha}
\newcommand{\gb}{\beta}
\newcommand{\gd}{\delta}
\newcommand{\gD}{\Delta}
\newcommand{\gs}{\sigma}
\newcommand{\mbq}{\mathbb{Q}}
\newcommand{\mbr}{\mathbb{R}}
\newcommand{\mbz}{\mathbb{Z}}
\newcommand{\mbc}{\mathbb{C}}
\newcommand{\mbn}{\mathbb{N}}
\newcommand{\mbp}{\mathbb{P}}
\newcommand{\mbf}{\mathbb{F}}
\newcommand{\mbe}{\mathbb{E}}
\newcommand{\mf}[1]{\mathfrak{#1}}
\newcommand{\ol}[1]{\overline{#1}}
\newcommand{\mc}[1]{\mathcal{#1}}
\newcommand{\nequiv}{\equiv\hspace{-.07in}/\;}
\newcommand{\bnequiv}{\equiv\hspace{-.13in}/\;}

\title{CM elliptic curves and vertically entangled 2-adic groups}
\date{}
\author{Nathan Jones}
\address{Department of Mathematics, Statistics and Computer Science, University of Illinois at Chicago, 851 S Morgan St, 322
SEO, Chicago, 60607, IL, USA}
\email{ncjones@uic.edu}

\maketitle

\begin{abstract}
Consider the elliptic curve $E$ given by the Weierstrass equation $y^2 = x^3 - 11x - 14$, which has complex multiplication by the order of conductor $2$ inside $\mbz[i]$.  It was recently observed in a paper of Daniels and Lozano-Robledo that, for each $n \geq 2$, $\mbq(\mu_{2^{n+1}}) \subseteq \mbq(E[2^n])$.  In this note, we prove that this (a priori surprising) ``tower of vertical entanglements'' is actually more a feature than a bug:  it holds for \emph{any} elliptic curve $E$ over $\mbq$ with complex multiplication by any order of even discriminant.
\end{abstract}

\section{Main result and proof}

Let $E$ be an elliptic curve over $\mbq$ with complex multiplication by the order $\mc{O}_{K,f} \subseteq \mc{O}_K$ of conductor $f$ inside the imaginary quadratic field $K$.  Since every endomorphism of $E$ defined over $\mbq$ commutes with the action of $\gal(\ol{\mbq}/\mbq)$, it follows that the image of the Galois representation
\[
\rho_{E,m} : \gal(\ol{\mbq}/\mbq) \longrightarrow \aut(E[m]) \simeq \GL_2(\mbz/m\mbz),
\]
(which is defined by letting $\gal(\ol{\mbq}/\mbq)$ act on $E[m]$, the $m$-torsion subgroup of $E$, and fixing a $\mbz/m\mbz$-basis thereof) lies inside a certain subgroup $\mc{N}_{\gd,\phi}(m) \subseteq \GL_2(\mbz/m\mbz)$, which we now specify, following \cite{lozanorobledo}.  First, let us set
\begin{equation*} \label{defoofphiandgd}
\begin{split}
\phi = \phi(\mc{O}_{K,f},m) :=& 
\begin{cases}
0 & \text{ if } \Delta_K f^2 \equiv 0 \bmod{4} \text{ or if $m$ is odd,} \\
f & \text{ if } \Delta_K f^2 \equiv 1 \bmod{4} \text{ and $m$ is even,}
\end{cases} \\
\gd = \gd(\mc{O}_{K,f},m) :=&
\begin{cases}
\gD_K f^2/4 & \text{ if } \Delta_K f^2 \equiv 0 \bmod{4} \text{ or if $m$ is odd,} \\
(\gD_K-1)f^2/4 & \text{ if } \Delta_K f^2 \equiv 1 \bmod{4} \text{ and $m$ is even.}
\end{cases}
\end{split}
\end{equation*}
Next, we define the associated \emph{Cartan subgroup} $\mc{C}_{\gd,\phi}(m)$ by
\begin{equation} \label{defofmcC}
\mc{C}_{\gd,\phi}(m) := \left\{ \begin{pmatrix} a + b\phi & b \\ b\gd & a \end{pmatrix} : a,b \in \mbz/m\mbz, \, a^2 + \phi ab - \gd b^2 \in (\mbz/m\mbz)^\times \right\}.
\end{equation}
Finally, we define $\mc{N}_{\gd,\phi}(m) \subseteq \GL_2(\mbz/m\mbz)$ by
\begin{equation} \label{defofmcN}
\mc{N}_{\gd,\phi}(m) := \left\langle \begin{pmatrix} -1 & 0 \\ \phi & 1 \end{pmatrix}, \, \mc{C}_{\gd,\phi}(m) \right\rangle.
\end{equation}
If $E$ is any elliptic curve over $\mbq$ with CM by $\mc{O}_{K,f}$, then, for an appropriate choice of $\mbz/m\mbz$-basis of $E[m]$, we have $\rho_{E,m}(G_\mbq) \subseteq \mc{N}_{\gd,\phi}(m)$.  For more details, see \cite{lozanorobledo}.

Let $E_{-16}$ be the elliptic curve defined by the Weierstrass equation $y^2 = x^3 - 11x - 14$ (i.e. the elliptic curve with Cremona label {\tt{32a3}}).  The curve $E_{-16}$ has CM by the order $\mc{O} := \mbz + 2i\mbz$ of conductor $2$ inside the field $\mbq(i)$.  Furthermore, as observed in \cite[Theorem 1.5]{danielslozanorobledo}, we have
\begin{equation} \label{verticalentanglement}
n \in \mbn_{\geq 2} \; \Longrightarrow \; \mbq(\zeta_{2^{n+1}}) \subseteq \mbq(E_{-16}[2^n]).
\end{equation}
The authors also observed that the elliptic curves $E_{-4,1}$ and $E_{-4,2}$, given, respectively, by the Weierstrass equations $y^2 = x^2 + x$ and $y^2 = x^3 + 2x$ satisfy
\[
\mbq(E_{-4,1}[2]) = \mbq(\zeta_4) \quad \text{ and } \quad \mbq(\zeta_8) \subseteq \mbq(E_{-4,2}[4]).
\]

The purpose of this note is to show that the two elliptic curves $E_{-4,1}$ and $E_{-4,2}$ also satisfy \eqref{verticalentanglement}.  More generally, we will prove the following theorem.
\begin{Theorem} \label{maintheorem}
Let $E$ be any elliptic curve over $\mbq$ with complex multiplication by an order $\mc{O}_{K,f}$ in an imaginary quadratic field $K$.  Assuming that the discriminant $\gD_K f^2$ of $\mc{O}_{K,f}$ is even, we have that, for each $n \in \mbn_{\geq 2}$, $\mbq(\zeta_{2^{n+1}}) \subseteq \mbq(E[2^n])$.
\end{Theorem}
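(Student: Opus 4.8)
The plan is to convert the field containment into a factorization statement about the mod-$2^{n+1}$ cyclotomic character and then read it off from the shape of the CM image. Write $G_\mbq := \gal(\ol{\mbq}/\mbq)$ and, for $k \geq 1$, let $\chi_{2^k} \colon G_\mbq \to (\mbz/2^k\mbz)^\times$ be the mod-$2^k$ cyclotomic character, so that $\mbq(\zeta_{2^k})$ is the fixed field of $\ker \chi_{2^k}$ while $\mbq(E[2^n])$ is the fixed field of $\ker \rho_{E,2^n}$. The Weil pairing gives $\det \circ \rho_{E,2^k} = \chi_{2^k}$, and by the Galois correspondence the desired containment $\mbq(\zeta_{2^{n+1}}) \subseteq \mbq(E[2^n])$ is equivalent to the reverse inclusion of subgroups $\ker \rho_{E,2^n} \subseteq \ker \chi_{2^{n+1}}$; thus it suffices to show that $\chi_{2^{n+1}}$ factors through $\rho_{E,2^n}$.

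Next I would record the simplification coming from the hypothesis. Every discriminant is $\equiv 0$ or $1 \bmod 4$, so evenness of $\gD_K f^2$ forces $\gD_K f^2 \equiv 0 \bmod 4$; hence for every even modulus we are in the first case of the definitions, giving $\phi = 0$ and $\gd = \gD_K f^2/4 \in \mbz$. In particular, at both levels $2^n$ and $2^{n+1}$ the Cartan $\mc{C}_{\gd,0}$ has the ``no cross term'' shape, with $\det \begin{pmatrix} a & b \\ b\gd & a \end{pmatrix} = a^2 - \gd b^2$, and the extra generator $c := \begin{pmatrix} -1 & 0 \\ 0 & 1 \end{pmatrix}$ of $\mc{N}_{\gd,0}$ has determinant $-1$.

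Now fix a $\mbz/2^{n+1}\mbz$-basis of $E[2^{n+1}]$ realizing $\rho_{E,2^{n+1}}(G_\mbq) \subseteq \mc{N}_{\gd,0}(2^{n+1})$; its reduction modulo $2^n$ is a basis of $E[2^n]$, and since multiplication by $2$ is Galois-equivariant the entrywise reduction $\pi \colon \mc{N}_{\gd,0}(2^{n+1}) \to \mc{N}_{\gd,0}(2^n)$ intertwines $\rho_{E,2^{n+1}}$ and $\rho_{E,2^n}$. The crux is to show $\ker \pi \subseteq \ker(\det)$, which presents $\det$ on $\mc{N}_{\gd,0}(2^{n+1})$ in the form $\psi \circ \pi$ for a homomorphism $\psi \colon \mc{N}_{\gd,0}(2^n) \to (\mbz/2^{n+1}\mbz)^\times$. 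An element of $\ker \pi$ is congruent to $I$ modulo $2^n$; for $n \geq 2$ it must then lie in $\mc{C}_{\gd,0}(2^{n+1})$, since $\pi(c) \neq I$ keeps the nontrivial coset away from the identity. Writing such an element as $\begin{pmatrix} a & b \\ b\gd & a \end{pmatrix}$ with $a = 1 + 2^n s$ and $b = 2^n t$, we get $\det = a^2 - \gd b^2 = 1 + 2^{n+1}s + 2^{2n}(s^2 - \gd t^2) \equiv 1 \pmod{2^{n+1}}$, using $2n \geq n+1$. This is precisely where $\phi = 0$ enters: an odd $\phi$ would contribute $\phi ab \equiv \phi\,2^n t \pmod{2^{n+1}}$, nonzero for odd $t$, destroying the factorization — consistent with odd-discriminant curves not entangling.

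Granting the claim, writing $\det = \psi \circ \pi$ yields
\[
\psi \circ \rho_{E,2^n} = \psi \circ \pi \circ \rho_{E,2^{n+1}} = \det \circ \rho_{E,2^{n+1}} = \chi_{2^{n+1}},
\]
so $\chi_{2^{n+1}}$ factors through $\rho_{E,2^n}$ and $\mbq(\zeta_{2^{n+1}}) \subseteq \mbq(E[2^n])$ follows. I expect the one genuine subtlety — and the reason the statement requires $n \geq 2$ — to be the treatment of the non-Cartan coset in the claim $\ker \pi \subseteq \ker(\det)$: when $n = 1$ the coset representative $c$ reduces to the identity modulo $2$ yet has determinant $-1 \not\equiv 1 \bmod 4$, so the factorization (and the containment) genuinely breaks down. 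The remaining care is bookkeeping: confirming that the fixed level-$2^{n+1}$ basis reduces to a legitimate basis so that $\pi$ really does intertwine the two representations.
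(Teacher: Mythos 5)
Your proposal is correct and is essentially the paper's own argument: both reduce the containment to factoring $\det$ (equivalently, the mod-$2^{n+1}$ cyclotomic character) through reduction mod $2^n$, and both obtain this from the same key fact that, since $\phi = 0$, every element of $\ker\bigl(\mc{N}_{\gd,0}(2^{n+1}) \to \mc{N}_{\gd,0}(2^n)\bigr)$ lies in the Cartan subgroup when $n \geq 2$ and hence has determinant $\equiv 1 \bmod{2^{n+1}}$, with the same diagnosis of the $n=1$ failure via the extra generator $c$. One point to tighten: excluding the non-Cartan coset from $\ker \pi$ requires $\pi(c) \notin \mc{C}_{\gd,0}(2^n)$, not merely $\pi(c) \neq I$ (if $\pi(c)$ were a nonidentity Cartan element, some element of $c\,\mc{C}_{\gd,0}(2^{n+1})$ could still reduce to the identity); here this holds for $n \geq 2$ because a Cartan element has equal diagonal entries, which would force $-1 \equiv 1 \bmod{2^n}$.
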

\begin{proof}
Let us denote by $G(2^n) := \rho_{E,2^n}(G_\mbq) \subseteq \mc{N}_{\gd,\phi}(2^n)$ the mod $2^n$ image associated to $E$.  We will establish that, for each $n \in \mbn_{\geq 2}$, there is a surjective homomorphism $\gd : G(2^n) \twoheadrightarrow (\mbz/2^{n+1}\mbz)^\times$ for which $\det |_{G(2^{n+1})} = \gd \circ \pi$, where $\pi : G(2^{n+1}) \to G(2^n)$ denotes the projection map.  In other words, the following diagram will commute:
\begin{equation} \label{diagramchase}
\begin{tikzcd}
G(2^{n+1}) \rar{\det} \dar{\pi} & (\mbz/2^{n+1}\mbz)^\times \dar \\
G(2^n) \rar{\det} \urar{\gd} & (\mbz/2^n\mbz)^\times.
\end{tikzcd}
\end{equation}
Once established, it will follow that
\begin{equation} \label{fieldcontainment}
\mbq(\zeta_{2^{n+1}}) = \mbq(E[2^{n+1}])^{\ker \det} = \mbq(E[2^{n+1}])^{\pi^{-1}(\ker \gd)} = \mbq(E[2^n])^{\ker \gd} \subseteq \mbq(E[2^n]).
\end{equation}
The key observation is that, since $\gD_K f^2$ is assumed even, it follows that $\phi=0$.  Considering \eqref{defofmcC} and \eqref{defofmcN}, we may then see that
\begin{equation} \label{keyobservation}
n \in \mbn_{\geq 2} \; \Longrightarrow \; \ker \left( \mc{N}_{\gd,\phi}(2^{n+1}) \to \mc{N}_{\gd,\phi}(2^n) \right) \subseteq \SL_2(\mbz/2^{n+1}\mbz).
\end{equation}
(The reason we require $n > 1$ is that otherwise we do not have $\ker \left( \mc{N}_{\gd,\phi}(2^{n+1}) \to \mc{N}_{\gd,\phi}(2^n) \right) \subseteq \mc{C}_{\gd,\phi}(2^{n+1})$, since $\begin{pmatrix} -1 & 0 \\ \phi & 1 \end{pmatrix} \equiv I \bmod{2}$; the consequent of \eqref{keyobservation} is false for $n = 1$.)  We now define the map $\gd$ by
\[
\gd(g) := \det(g'), \quad \text{ where } g' \in \pi^{-1}(g).
\]
By virtue of \eqref{keyobservation}, this is independent of the choice of $g' \in \pi^{-1}(g)$, and thus defines a map $\gd : G(2^n) \to (\mbz/2^{n+1}\mbz)^\times$.  It is surjective since $\det : G(2^{n+1}) \to (\mbz/2^{n+1}\mbz)^\times$ is, and the diagram \eqref{diagramchase} commutes by definition of $\gd$.  Thus, by \eqref{fieldcontainment}, we deduce that
\[
\forall n \in \mbn_{\geq 2}, \quad \mbq(\zeta_{2^{n+1}}) \subseteq \mbq(E[2^n]),
\]
as asserted.
\end{proof}
The proof of Theorem \ref{maintheorem} applies to a more general situation, as follows.  Given an algebraic group $\mc{G}$ and a Galois representation $\rho : \gal(\ol{\mbq}/\mbq) \to \mc{G}(\hat{\mbz})$, let us denote by $\rho_m : \gal(\ol{\mbq}/\mbq) \to \mc{G}(\mbz/m\mbz)$ the composition of $\rho$ with the natural projection map $\mc{G}(\hat{\mbz}) \to \mc{G}(\mbz/m\mbz)$ and define
$
\mbq(E(\rho[m])) := \ol{\mbq}^{\ker \rho_m}.
$
\begin{Definition}
Suppose $\mc{G}$ is any algebraic group that admits a homomorphism $\gd : \mc{G} \to \mathbb{G}_m$ to the multiplicative group.  We say that a Galois representation $\rho : \gal(\ol{\mbq}/\mbq) \to \mc{G}(\hat{\mbz})$ \textbf{\emph{extends the cyclotomic character}} if $\gd_{\hat{\mbz}} \circ \rho : \gal(\ol{\mbq}/\mbq) \to \mc{G}(\hat{\mbz}) \to \hat{\mbz}^\times$ agrees with the cyclotomic character.  For any prime number $p$, we say that $\mc{G}(\mbz_p)$ form a \textbf{\emph{vertically entangled $p$-adic group}} if there exists $n_0 \in \mbn$ so that, for each $n \in \mbn_{\geq n_0}$, we have $\ker\left( \mc{G}(\mbz/p^{n+1}\mbz) \to \mc{G}(\mbz/p^n\mbz) \right) \subseteq \ker \gd_{p^{n+1}}$, where $\gd_{p^{n+1}} : \mc{G}(\mbz/p^{n+1}\mbz) \to (\mbz/p^{n+1}\mbz)^\times$ denotes the group homomorphism associated to $\gd$ on the mod $p^{n+1}$ points of $\mc{G}$.
\end{Definition}
\begin{remark}
Let $\rho : \gal(\ol{\mbq}/\mbq) \to \mc{G}(\hat{\mbz})$ be any Galois representation that extends the cyclotomic character and suppose that, for some prime number $p$, the group $\mc{G}(\mbz_p)$ is a vertically entangled $p$-adic group.  Then the proof of Theorem \ref{maintheorem} shows that, in this more general context, we have
\[
\forall n \in \mbn_{\geq n_0}, \quad \mbq(\mu_{p^{n+1}}) \subseteq \mbq(\rho[p^n]).
\] 
\end{remark}
\section{Acknowledgement}
The author gratefully acknowledges Harris Daniels for bringing the phenomenon \eqref{verticalentanglement} to his attention, and also Ken McMurdy for subsequent stimulating conversations.


\begin{thebibliography}{99}

\bibitem{danielslozanorobledo} H. Daniels and A. Lozano-Robledo, \emph{Coincidences of division fields}, preprint. To appear in Ann. Inst. Fourier.  Available at {\tt{https://arxiv.org/abs/1912.05618}}

\bibitem{lozanorobledo} A. Lozano-Robledo,  \emph{Galois representations attached to elliptic curves with complex multiplication}, preprint.  To appear in Algebra and Number Theory.  Available at {\tt{https://arxiv.org/abs/1809.02584}}

\end{thebibliography}
\end{document}